\DeclareMathOperator{\re}{Re}
\DeclareMathOperator{\im}{Im}
  \newtheorem{theorem}{Theorem}
  \newtheorem{corollary}{Corollary}
  \newtheorem{lemma}{Lemma}
\begin{document}

\title[Set partitions without blocks of certain sizes]{Set partitions without blocks \\ of certain sizes}
\author{Joshua Culver}
\address{
Department of Mathematics,
Southern Utah University,
Cedar City, Utah 84720}
\email{joshua.e.culver@gmail.com}

\author{Andreas Weingartner}
\address{
Department of Mathematics,
Southern Utah University,
Cedar City, Utah 84720}
\email{weingartner@suu.edu}

\subjclass[2010]{Primary 05A18; Secondary 05A16}

%\keywords{Set partitions, exponential generating function, practical set partitions.}
%\date{May 25, 2018} 

\begin{abstract}
We give an asymptotic estimate for the number of partitions of a set of $n$ elements, whose block sizes avoid a given set $\mathcal{S}$ of natural numbers. As an application, we derive an estimate for the number of partitions of a set with $n$ elements, which have the property that its blocks can be combined to form subsets of any size between $1$ and $n$.
\end{abstract}

\maketitle

\section{Introduction}

Let $B_n$ be the $n$-th Bell number, that is the number of set-partitions of a set with $n$ distinct elements. 
For example, $B_3=5$ because there are five set-partitions of $\{a,b,c\}$:
$$\{\{a,b,c\}\}, \ \{\{a\},\{b,c\}\}, \ \{\{b\},\{a,c\}\}, \ \{\{c\},\{a,b\}\}, \ \{\{a\},\{b\},\{c\}\}.$$
We say that the partition $\{\{a\},\{b,c\}\}$ has the block $\{a\}$ of size $1$ and the block $\{b,c\}$ of size $2$.
Blocks of size zero (empty blocks) are not allowed.

Let $\mathcal{S} \subset \mathbb{N}$ and let 
$B_{n,\mathcal{S}}$ denote the number of partitions of a set with $n$ elements, whose block sizes are not in $\mathcal{S}$.
If $\mathcal{S}=\{1,2,\ldots, m\}$, we write
$B_{n,m}=B_{n,\mathcal{S}}$, the number of partitions of a set with $n$ elements, all of whose block sizes  are greater than $m$. 
We shall call such partitions \emph{$m$-rough}. For example, $B_{3,1}=B_{3,2}=1$. We clearly have $B_{n,0}=B_n$.
We define $B_{0,\mathcal{S}}=1$ for all $\mathcal{S}$.

Throughout this paper, we write $r=r(n)$ to denote the solution of 
\begin{equation}\label{req}
  r e^{r}=n.
\end{equation}
The function $r$ is called Lambert-$W$ function or product logarithm, and it can be approximated with the asymptotic formula (see \cite{CGH})
\begin{equation}\label{Wasymp}
r=\log n - \log \log n +\frac{\log\log n}{ \log n} + O\left(\left(\frac{\log\log n}{ \log n}\right)^2\right).
\end{equation}
Let
$$\alpha(z)=\alpha_\mathcal{S}(z)=  \sum_{k\in \mathcal{S}} \frac{z^k}{k!}.$$

Our first result is an asymptotic estimate for $B_{n,\mathcal{S}}$, derived from 
Cauchy's residue theorem and the saddle point method.
\begin{theorem}\label{thm1}
Let $\eta_1=0.1866823...$, $\eta_2=2.1555352...$ be the two real solutions of $\eta(1-\log \eta )=1/2$ 
and let $0<\delta_1 < \eta_1 < \eta_2 <\delta_2.$
We have 
$$B_{n,\mathcal{S}}= \frac{n! \exp\bigl(e^r-1-\alpha(r)\bigr)}{ r^n \sqrt{2\pi r(r+1)e^r}} \left(1+O\left(\frac{1+(\alpha'(r))^2}{e^r}\right)\right),$$
uniformly for sets $\mathcal{S}$ with $\mathcal{S} \cap [\delta_1 r , \delta_2 r] = \emptyset$.
\end{theorem}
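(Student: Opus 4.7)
The plan is a saddle point analysis of the Cauchy integral representation of $B_{n,\mathcal{S}}$. By the exponential formula the EGF of $B_{n,\mathcal{S}}$ is $\exp(e^z-1-\alpha(z))$, since the EGF for allowed block sizes is $e^z-1-\alpha(z)$. Applying Cauchy's formula on the circle $|z|=r$ and substituting $z=re^{i\theta}$ gives
\begin{equation*}
B_{n,\mathcal{S}} = \frac{n!}{2\pi r^n}\int_{-\pi}^{\pi}\exp\bigl(f(re^{i\theta})-in\theta\bigr)\,d\theta, \qquad f(z):=e^z-1-\alpha(z).
\end{equation*}
I would deliberately take $r$ to be the Bell saddle defined by \eqref{req}, rather than the exact saddle of $f(z)-n\log z$; this keeps $r$ independent of $\mathcal{S}$, at the price of an uncancelled imaginary linear term in the Taylor expansion that will ultimately produce the $(\alpha'(r))^2/e^r$ factor in the error.

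Next I would split the interval at a cutoff $\theta_0$ slightly larger than the Gaussian width $(n(r+1))^{-1/2}\sim(re^{r/2})^{-1}$ but small enough that the cubic Taylor remainder is $o(1)$ throughout $|\theta|\le\theta_0$. On this central arc the expansion reads
\begin{equation*}
f(re^{i\theta})-in\theta = f(r)-ir\alpha'(r)\theta-\tfrac12\bigl(n(r+1)-r\alpha'(r)-r^2\alpha''(r)\bigr)\theta^2+R(\theta),
\end{equation*}
where $re^r=n$ was used to cancel the principal linear term. Extending the limits to infinity and applying $\int_{-\infty}^{\infty}e^{-A\theta^2/2-ib\theta}\,d\theta=\sqrt{2\pi/A}\,\exp(-b^2/(2A))$ with $A=n(r+1)=r(r+1)e^r$ and $b=r\alpha'(r)$ reproduces the stated prefactor $\sqrt{2\pi r(r+1)e^r}$, and the exponential of $-b^2/(2A)=-r(\alpha'(r))^2/(2(r+1)e^r)$ expands to $1+O((\alpha'(r))^2/e^r)$. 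Absorbing $R$ and the smaller quadratic corrections involving $\alpha'(r)$, $\alpha''(r)$ supplies the additional $O(1/e^r)$.

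The real work, and the main obstacle, is the tail estimate for $|\theta|>\theta_0$, and this is where the hypothesis $\mathcal{S}\cap[\delta_1 r,\delta_2 r]=\emptyset$ is essential. Since $|\exp f(re^{i\theta})|=\exp(\re f(re^{i\theta}))$, I need $\re f(re^{i\theta})$ to fall substantially below $f(r)$ on the whole tail. The classical Bell bound $|e^{re^{i\theta}}|=e^{r\cos\theta}$ delivers the drop from $e^r$; the novel issue is ensuring that $|\alpha(re^{i\theta})|$ cannot eat up this gain. Here I would apply Stirling: for $k=\beta r$ the coefficient $r^k/k!$ is of order $\exp(r\beta(1-\log\beta))/\sqrt{r}$, and by the definition of $\eta_1,\eta_2$ the function $\beta(1-\log\beta)$ is bounded away from $\tfrac12$ on $\beta\le\delta_1$ and on $\beta\ge\delta_2$. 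Consequently $|\alpha(re^{i\theta})|=O(e^{(1/2-c)r})$ uniformly for some $c>0$ depending only on $\delta_1,\delta_2$, which is exponentially smaller than $e^r$. Combining the central Gaussian contribution with the resulting negligible tail then yields the claimed asymptotic.
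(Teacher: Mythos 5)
Your overall plan matches the paper's: Cauchy's formula on $|z|=r$ with $r$ the Bell saddle, a central-arc Taylor expansion, and a tail bound that exploits $\mathcal{S}\cap[\delta_1 r,\delta_2 r]=\emptyset$ through a Stirling estimate showing $\alpha(r)$ and its derivatives are $\ll e^{(1/2-c)r}$ (this is exactly the paper's Lemma \ref{alpha2}). Completing the square on the linear term rather than expanding $e^{-it}=1-it+O(t^2)$ and invoking parity is a harmless variation.

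However, there is a genuine gap in your choice of cutoff. You propose $\theta_0$ ``slightly larger than the Gaussian width $(n(r+1))^{-1/2}\sim (re^{r/2})^{-1}$.'' This fails even in the Bell case $\mathcal{S}=\emptyset$: at $\theta=\theta_0\sim e^{-r/2}/r$ the available drop is only $\re e^{re^{i\theta}}-e^r\approx -A\theta_0^2/2=O(1)$, so the tail contributes $\gtrsim \theta_0\cdot e^{-O(1)}\sim e^{-r/2}/r$, which is the \emph{same order} as the central Gaussian contribution $\sqrt{2\pi/A}$, not negligible. The situation is worse when $\mathcal{S}\neq\emptyset$: since the hypothesis only gives $\alpha(r)\ll e^{r/2}/r^J$, the term $\alpha(r)+|\alpha(re^{i\theta})|\le 2\alpha(r)$ you must beat can be as large as $e^{r/2}/r^J$, and the drop at your proposed $\theta_0$ is $O(1)$, so the integrand on the near tail can be enormous. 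The cutoff must instead be of size $\delta\asymp\sqrt{(1+\alpha(r))e^{-r}}$ (the paper takes $\delta=\sqrt{12(1+\alpha(r))e^{-r}}$), i.e.\ a polynomial factor $r^{3/2}$ larger than the Gaussian width when $\alpha(r)$ is bounded and up to a factor $\sim e^{r/4}$ larger when $\alpha(r)\sim e^{r/2}/r^J$. Only then does the drop $-e^r r\delta^2/6=-2r(1+\alpha(r))$ dominate the $2\alpha(r)$ gain and yield a tail $\ll e^{-2r}$, and Lemma \ref{alpha2} is then needed again to ensure the quartic Taylor remainder $O(r^4e^r\delta^4)$ is still bounded.

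A smaller issue: you claim the corrections from $\alpha''(r)$ to $A$ supply ``the additional $O(1/e^r)$.'' Lemma \ref{alpha2} alone only gives $\alpha''(r)\ll e^{r/2}/r^J$, so the relative correction $r^2\alpha''(r)/(r(r+1)e^r)$ could be as large as $e^{-r/2}/r^{J-1}$, much larger than $1/e^r$. The paper closes this by proving separately that $\alpha''(r)\le 3+3\alpha'(r)$, which gives the correction $O\bigl((1+\alpha'(r))/e^r\bigr)$ and is then absorbed into $O\bigl((1+(\alpha'(r))^2)/e^r\bigr)$. You would need an analogous comparison between $\alpha''(r)$ and $\alpha'(r)$.
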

Lemma \ref{alpha2} shows that the relative error term in Theorem \ref{thm1} approaches zero as $n\to \infty$, 
uniformly for sets $\mathcal{S}$ with $\mathcal{S} \cap [\delta_1 r , \delta_2 r] = \emptyset$.
The constant factor implied in the big-O notation depends only on the choice of the constants $\delta_1, \delta_2$, but 
does not depend on $n$ or $\mathcal{S}$.
On the other hand, if $\mathcal{S} \cap [\gamma_1 r , \gamma_2 r] \neq \emptyset$, for
constants $\eta_1<\gamma_1 < \gamma_2 < \eta_2$, then the error term in Theorem \ref{thm1} grows unbounded
as $n\to \infty$. In this case, it seems that a different method is needed to determine the asymptotic behavior of $B_{n,\mathcal{S}}$. 

The contribution to $\alpha(r)$ from $k\ge (e+\varepsilon)r$ is $o(1)$ as $n\to \infty$, if $\varepsilon>0$. Thus, 
$B_{n,\mathcal{S}}\sim B_{n,\mathcal{S'}}$ if $\mathcal{S}$ and $\mathcal{S}'$ 
avoid  $[\delta_1 r , \delta_2 r]$  and differ only on  $ ((e+\varepsilon)r,\infty)$.

If $\max \mathcal{S} \le r$, the occurrence of $\alpha'(r)$ in the error term of Theorem \ref{thm1} can be estimated 
as in equation \eqref{alphaineq}, with $i=1$, to obtain 
$ \alpha'(r) < \left(r/m\right)^{m-1} e^m $, where $m=\max \mathcal{S}$.
\begin{corollary}\label{core}
With  $\delta_1$ as in Theorem \ref{thm1} and $m=\max \mathcal{S}$, we have
$$B_{n,\mathcal{S}}= \frac{n! \exp\bigl(e^r-1-\alpha(r)\bigr)}{ r^n \sqrt{2\pi r(r+1)e^r}} \left(1+O\left(\frac{(e r/m)^{2m-2}}{e^r}\right)\right),$$
uniformly for sets $\mathcal{S}$ with $1\le m \le \delta_1 r$.
\end{corollary}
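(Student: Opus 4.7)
The plan is to obtain Corollary~\ref{core} as an immediate specialization of Theorem~\ref{thm1}, inserting the explicit upper bound on $\alpha'(r)$ recorded in the paragraph preceding the corollary. First I would fix any $\delta_2>\eta_2$ (the implied constant is allowed to depend on it) and check that the hypothesis $1\le m\le \delta_1 r$ yields $\mathcal{S}\subseteq\{1,\ldots,\lfloor \delta_1 r\rfloor\}$, so that $\mathcal{S}\cap[\delta_1 r,\delta_2 r]=\emptyset$ (shrinking $\delta_1$ by an arbitrarily small amount if one wishes to avoid the boundary case $m=\delta_1 r\in\mathbb{Z}$). Theorem~\ref{thm1} then gives
\[
B_{n,\mathcal{S}}= \frac{n! \exp\bigl(e^r-1-\alpha(r)\bigr)}{ r^n \sqrt{2\pi r(r+1)e^r}} \left(1+O\left(\frac{1+(\alpha'(r))^2}{e^r}\right)\right).
\]

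Next I would invoke the estimate $\alpha'(r)<(r/m)^{m-1}e^{m}$, obtained from inequality \eqref{alphaineq} at $i=1$ as the authors indicate immediately before the corollary. Squaring and dividing by $e^r$ gives
\[
\frac{1+(\alpha'(r))^2}{e^r}\ \ll\ \frac{1+(r/m)^{2m-2}e^{2m}}{e^r}\ \ll\ \frac{(er/m)^{2m-2}}{e^r},
\]
where the last step uses $(er/m)^{2m-2}\ge 1$. This holds because $m\le \delta_1 r$ with $\delta_1<\eta_1<1/e$ forces $er/m>1$, so for $m\ge 2$ the factor exceeds $1$ and the additive $1$ in the numerator is absorbed; for $m=1$ the exponent $2m-2$ vanishes and the factor equals $1$, again absorbing the constant. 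Substituting this bound back into the displayed formula yields the stated relative error $O\bigl((er/m)^{2m-2}/e^r\bigr)$.

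There is essentially no obstacle: the entire corollary is algebraic bookkeeping once Theorem~\ref{thm1} and the elementary estimate \eqref{alphaineq} are in hand. The only subtlety worth flagging is the verification of the upper bound $\alpha'(r)\ll (r/m)^{m-1}e^{m}$ itself: since $r/k\ge r/m\ge 1/\delta_1>1$ for every $k\le m$ in $\mathcal{S}$, the terms of $\alpha'(r)=\sum_{k\in\mathcal{S}} r^{k-1}/(k-1)!$ are geometrically increasing and the sum is dominated by its largest term $r^{m-1}/(m-1)!$, which Stirling's inequality converts into $(r/m)^{m-1}e^{m}$ up to an absolute constant.
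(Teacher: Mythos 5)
Your proposal is correct and follows the paper's approach: Corollary~\ref{core} is obtained by specializing Theorem~\ref{thm1} and replacing $\alpha'(r)$ in the error term by the bound $\alpha'(r)<(r/m)^{m-1}e^m$ from \eqref{alphaineq} with $i=1$, exactly as the paper indicates in the paragraph preceding the corollary; your Stirling-based re-derivation of that bound and your absorption of the additive $1$ into $(er/m)^{2m-2}$ are both sound. One tiny slip: to cover the boundary case $m=\delta_1 r$ you should \emph{enlarge} $\delta_1$ slightly (to some $\delta_1'$ with $\delta_1<\delta_1'<\eta_1$) before invoking Theorem~\ref{thm1}, not shrink it, since shrinking would fail to contain $\mathcal{S}$.
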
 

 If $\mathcal{S}=\emptyset$, then $\alpha(r)=0$ and Theorem \ref{thm1} simplifies to the known asymptotic estimate for the Bell numbers $B_n$ (see Moser and Wyman \cite{MW}):
\begin{corollary}\label{cor0}
We have 
$$B_{n}= \frac{n! \exp\left(e^r-1\right)}{ r^n \sqrt{2\pi r(r+1)e^r}} \Bigl(1+O\left(e^{-r}\right)\Bigr).$$
\end{corollary}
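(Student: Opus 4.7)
The plan is to derive the corollary directly from Theorem \ref{thm1} by specializing to the case $\mathcal{S}=\emptyset$. The first observation is that $B_{n,\emptyset}=B_n$: requiring block sizes to avoid the empty set imposes no restriction at all, so this counts all set-partitions of an $n$-element set.

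Next I would check that the hypotheses of Theorem \ref{thm1} are satisfied. The set $\emptyset$ vacuously satisfies $\mathcal{S}\cap[\delta_1 r,\delta_2 r]=\emptyset$ for any choice of $0<\delta_1<\eta_1<\eta_2<\delta_2$, so one may fix any admissible pair, for instance $\delta_1=\eta_1/2$ and $\delta_2=2\eta_2$. With $\mathcal{S}=\emptyset$, the associated generating function $\alpha(z)=\sum_{k\in\mathcal{S}}z^k/k!$ is identically zero, hence $\alpha(r)=0$ and $\alpha'(r)=0$.

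Substituting these values into the conclusion of Theorem \ref{thm1} collapses the main term to $n!\exp(e^r-1)/(r^n\sqrt{2\pi r(r+1)e^r})$ and reduces the relative error to $O((1+0)/e^r)=O(e^{-r})$, which is exactly the claimed formula. There is no genuine obstacle to overcome: all the analytic work (the saddle point analysis, the residue computation, the verification that the relative error tends to zero) has already been carried out in Theorem \ref{thm1}, and the corollary is simply the instance $\mathcal{S}=\emptyset$ of that general estimate.
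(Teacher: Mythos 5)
Your proposal is correct and matches the paper's reasoning exactly: the paper also obtains Corollary~\ref{cor0} by specializing Theorem~\ref{thm1} to $\mathcal{S}=\emptyset$, noting that $\alpha\equiv 0$ and the hypothesis is vacuously satisfied. Nothing is missing.
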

Dividing the estimate in Theorem \ref{thm1} by the one in Corollary \ref{cor0} leads to the following result.
\begin{corollary}\label{cor1}
Let  $\delta_1,\delta_2$ be as in Theorem \ref{thm1}. The proportion of set partitions of $n$ objects, whose block sizes are not in $\mathcal{S}$, is
$$ \frac{B_{n,\mathcal{S}}}{B_n} = \exp\bigl(-\alpha(r) \bigr)\left(1+O\left(\frac{(\alpha'(r))^2}{e^r}\right)\right)
= \exp\bigl(-\alpha(r) \bigr)\bigl(1+o(1)\bigr) ,$$
as $n\to \infty$, uniformly for sets $\mathcal{S}$ with $\mathcal{S} \cap [\delta_1 r , \delta_2 r] = \emptyset$.
\end{corollary}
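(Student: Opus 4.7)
The approach is exactly what the authors indicate in the sentence immediately preceding the statement: take the ratio of the asymptotic formula in Theorem~\ref{thm1} and that in Corollary~\ref{cor0}. First I would observe that the prefactor $n!\exp(e^r-1)/(r^n \sqrt{2\pi r(r+1)e^r})$ appears identically in both estimates and therefore cancels exactly; the only surviving piece from the main term is the factor $\exp(-\alpha(r))$ coming from Theorem~\ref{thm1}. This delivers the claimed leading-order behavior without any work.

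Next I would combine the two relative error terms. Writing
$$\frac{B_{n,\mathcal{S}}}{B_n} = \exp(-\alpha(r)) \cdot \frac{1+O\bigl((1+(\alpha'(r))^2)/e^r\bigr)}{1+O(e^{-r})},$$
I would invoke Lemma~\ref{alpha2}, which (as noted just after Theorem~\ref{thm1}) guarantees that $(1+(\alpha'(r))^2)/e^r \to 0$ uniformly for $\mathcal{S}\cap[\delta_1 r,\delta_2 r]=\emptyset$. Consequently $(1+O(e^{-r}))^{-1}=1+O(e^{-r})$, and multiplying through and combining $O$-terms leaves an error of size $O((1+(\alpha'(r))^2)/e^r)$. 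This simplifies to the stated $O((\alpha'(r))^2/e^r)$ by absorbing the remaining $1/e^r$ either into the dominant $(\alpha'(r))^2/e^r$ (when $\alpha'(r)^2 \ge 1$) or, in the regime where $\alpha'(r)$ is small, into the implicit constant, since either way the expression is $o(1)$. The second form, with $o(1)$ in place of the explicit $O$-term, then follows a fortiori, again by the uniform statement of Lemma~\ref{alpha2}.

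I do not foresee any genuine obstacle. The proof is a short algebraic manipulation followed by a single appeal to Lemma~\ref{alpha2}. The only point warranting attention is preserving the uniformity in $\mathcal{S}$ at each step; this is automatic because both Theorem~\ref{thm1} and Lemma~\ref{alpha2} are already uniform over the class of admissible sets $\mathcal{S}$, so the conclusion inherits the same uniformity without additional argument.
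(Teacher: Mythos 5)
Your approach matches the paper's: the paper offers no proof beyond the sentence ``Dividing the estimate in Theorem~\ref{thm1} by the one in Corollary~\ref{cor0} leads to the following result,'' and you carry out exactly that division, correctly cancel the common prefactor $n!\exp(e^r-1)/(r^n\sqrt{2\pi r(r+1)e^r})$, invert the factor $1+O(e^{-r})$, and invoke Lemma~\ref{alpha2} to get the uniform $o(1)$ statement.

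The one real problem is the final reduction of $O\bigl((1+(\alpha'(r))^2)/e^r\bigr)$ to $O\bigl((\alpha'(r))^2/e^r\bigr)$. The absorption you describe is valid only when $\alpha'(r)\ge 1$. When $\alpha'(r)<1$ --- which certainly occurs, e.g.\ when $\mathcal{S}$ consists of a few integers far above $\delta_2 r$, making $\alpha'(r)$ arbitrarily small --- the term $1/e^r$ is the \emph{larger} one, and folding it ``into the implicit constant'' would require a constant that grows without bound as $\alpha'(r)\to0$, which is exactly what the claimed uniformity in $\mathcal{S}$ forbids. That both quantities are $o(1)$ does not license replacing one $O$-term by the other. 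What the division honestly delivers is
$$\frac{B_{n,\mathcal{S}}}{B_n}=\exp\bigl(-\alpha(r)\bigr)\left(1+O\!\left(\frac{1+(\alpha'(r))^2}{e^r}\right)\right),$$
and that already yields the $o(1)$ version via Lemma~\ref{alpha2}. The paper's printed error term drops the $1+$ silently (its numerical illustrations in Tables~\ref{table1} and~\ref{table1b} all have $\alpha'(r)\ge1$, where the two forms agree), but your attempt to justify that drop as a general step is where the argument breaks; you should simply keep the $1+(\alpha'(r))^2$ in the numerator.
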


\begin{table}[ht]
  \begin{tabular}{ | c | c |c|c| }
    \hline
    $n$ & $B_{n,1}/B_n$ & $\exp(-r)$ & Rel. Error  \\ \hline  
    $2^2$ & 0.266667 &0.300542 &0.127032 \\ \hline
    $2^4$ & 0.116036 &0.128325 &0.105906  \\ \hline
    $2^6$ & 0.045716 & 0.047583 & 0.040834   \\ \hline
     $2^8$ & 0.015896 & 0.016123 & 0.014298    \\ \hline
     $2^{10}$ & 0.005122 &0.005146 & 0.004675 \\ \hline
     $2^{12}$ & 0.001573 & 0.001575 & 0.001456 \\ \hline
      $2^{14}$ & 0.000468 & 0.000468 & 0.000438  \\ \hline
     \end{tabular}
     \medskip
 
     \caption{Proportion of 1-rough set partitions: numerical examples of Corollary \ref{cor1} for  $\mathcal{S}=\{1\}$ and $\alpha(r)=r$, showing the ratio $B_{n,\mathcal{S}}/B_n$, the approximation $\exp(-\alpha(r))$ and the relative error $\exp(-\alpha(r))/(B_{n,\mathcal{S}}/B_n)-1$.
}\label{table1}
\end{table}

\begin{table}[ht]
   \begin{tabular}{ |c|c|c|c|c| }
    \hline
    $n$ &  $B_{n,2}/B_n$ & $\exp(-r-\frac{r^2}{2})$ & Rel. Error & $(1+r)^2 e^{-r}$\\ \hline  
    $2^2$  & $6.667 \cdot 10^{-2}$ & $1.459 \cdot 10^{-1}$ & 1.1886 & 1.4575 \\ \hline
    $2^4$  & $ 8.772 \cdot 10^{-3}$ &$ 1.559 \cdot 10^{-2}$& 0.7776 & 1.1962 \\ \hline
    $2^6$  & $3.185 \cdot 10^{-4}$ & $4.610 \cdot 10^{-4}$ &0.4474  & 0.7787 \\ \hline
     $2^8$  & $2.628\cdot 10^{-6}$ &  $3.222\cdot 10^{-6}$ &  0.2257 & 0.4239 \\ \hline
     $2^{10}$  & $4.356 \cdot 10^{-9}$ &  $4.805 \cdot 10^{-9}$ &  0.1033 & 0.2023 \\ \hline
     $2^{12}$ & $1.368 \cdot 10^{-12} $ & $1.428 \cdot 10^{-12}$ &  0.0438 & 0.0875 \\ \hline
      $2^{14}$  & $7.902 \cdot 10^{-17} $ & $8.040 \cdot 10^{-17}$ &  0.0175 & 0.0352 \\ \hline
     \end{tabular}
     \medskip
     \caption{Proportion of 2-rough set partitions: numerical examples of Corollary \ref{cor1} for $\mathcal{S}=\{1,2\}$ and $\alpha(r)=r+r^2/2$, showing the ratio $B_{n,\mathcal{S}}/B_n$, the approximation $\exp(-\alpha(r))$, the relative error $\exp(-\alpha(r))/(B_{n,\mathcal{S}}/B_n)-1$, and  $(\alpha'(r))^2 e^{-r}$, the relative error term in Corollary \ref{cor1}.
}\label{table1b}
\end{table}

With 1-rough partitions, $\alpha(r)=r$, so that $e^{-r}$ is the main term as well as the relative error term in Corollary \ref{cor1}.
This is consistent with Table \ref{table1}, where the relative errors are $<e^{-r}$.
Similarly, for 2-rough partitions, the relative errors in Table \ref{table1b} are $<(1+r)^2 e^{-r}$, the relative error term in Corollary \ref{cor1}.

Since $e^{-r}=r/n$ and $r\sim\log n$ by equation \eqref{Wasymp}, the proportion of 1-rough set partitions (i.e. partitions with no singletons) is asymptotic to $(\log n) /n$. 
Corollary \ref{cor2} makes that more precise.
\begin{corollary}\label{cor2}
The proportion of $1$-rough set partitions of $n$ objects is
$$ \frac{B_{n,1}}{B_n} = \frac{r}{n}\left(1+O\left(\frac{\log n}{n}\right)\right) 
= \frac{\log (n/\log n) }{n}\left(1+O\left(\frac{\log\log n}{(\log n)^2}\right)\right) .$$
\end{corollary}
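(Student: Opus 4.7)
The plan is to derive this directly from Corollary \ref{cor1} by specializing to $\mathcal{S}=\{1\}$ and then applying the defining equation \eqref{req} and the asymptotic \eqref{Wasymp} of the Lambert-$W$ function.

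First I would set $\mathcal{S}=\{1\}$, so that $\alpha(r)=r$ and $\alpha'(r)=1$. Since $1 \le \delta_1 r$ for all sufficiently large $n$ (because $r\to \infty$), Corollary \ref{cor1} applies and yields
$$\frac{B_{n,1}}{B_n} = e^{-r}\left(1+O\bigl(e^{-r}\bigr)\right).$$
Next I would invoke the defining relation $re^{r}=n$ from \eqref{req} to rewrite $e^{-r}=r/n$. Substituting this in both the main term and the error term gives
$$\frac{B_{n,1}}{B_n} = \frac{r}{n}\left(1+O\!\left(\frac{r}{n}\right)\right) = \frac{r}{n}\left(1+O\!\left(\frac{\log n}{n}\right)\right),$$
where in the last step I use that $r=O(\log n)$ by \eqref{Wasymp}. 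That establishes the first equality.

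For the second equality I need to replace $r$ by $\log(n/\log n)=\log n - \log\log n$. By \eqref{Wasymp},
$$r - \log(n/\log n) = \frac{\log\log n}{\log n} + O\!\left(\left(\frac{\log\log n}{\log n}\right)^{\!2}\right),$$
so dividing by $\log(n/\log n)\sim \log n$ gives
$$\frac{r}{\log(n/\log n)} = 1 + O\!\left(\frac{\log\log n}{(\log n)^2}\right).$$
Therefore
$$\frac{r}{n} = \frac{\log(n/\log n)}{n}\left(1+O\!\left(\frac{\log\log n}{(\log n)^2}\right)\right),$$
and multiplying by the factor $(1+O(\log n/n))$ from the first equality absorbs the much smaller error $O(\log n/n)$ into $O(\log\log n/(\log n)^2)$, yielding the stated form.

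I do not expect any real obstacle here; the statement is essentially a bookkeeping consequence of Corollary \ref{cor1}, combined with the two asymptotic identities $e^{-r}=r/n$ and \eqref{Wasymp}. The only point requiring any care is verifying that $O(\log n/n)$ is dominated by $O(\log\log n/(\log n)^2)$, which is immediate for large $n$.
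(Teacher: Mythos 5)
Your proof is correct and matches the route the paper implicitly takes (the paper states no explicit proof for this corollary, but the preceding paragraph says exactly: use $e^{-r}=r/n$ and \eqref{Wasymp}). Specializing Corollary \ref{cor1} to $\mathcal{S}=\{1\}$, substituting $e^{-r}=r/n$, and then invoking \eqref{Wasymp} to pass from $r$ to $\log(n/\log n)$ is precisely the intended bookkeeping.
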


The quantity $B_{n,m}$ appears in \cite{WM}. However, \cite[Prop. 2]{WM} claims that 
$B_{n,1}/B_n \sim (\log n)/(ne)$, which is false in light of Corollary \ref{cor2}.
Moreover, the asymptotic estimate for $B_{n,m}$ in \cite[Prop. 4]{WM} is not correct, because $\exp(e^r)\nsim \exp(n/\log n) $  by \eqref{Wasymp},
even though $e^r =n/r \sim n/\log n$.

\bigskip

We now turn to an application of Theorem \ref{thm1}. In analogy with practical numbers \cite{Sri, PDD} and practical integer partitions \cite{DN,ES}, we say that a partition of a set of $n$ objects is \emph{practical} if its blocks can be combined to form subsets of any size between 1 and $n$. 
Thus, if the partition has $l$ blocks of sizes $a_1, a_2, \ldots ,a_l$,
the partition is practical if and only if
$$ \left\{ \sum_{i=1}^l \varepsilon_i a_i \ : \ \varepsilon_i\in \{0,1\}\right\} = \{0,1,2,3,\ldots,n\}.$$
For example, when $n=7$, the set partition
$\{\{a\},\{b,c\},\{d,e,f,g\}\}$
is practical, but 
$\{\{a\},\{b,c,d\},\{e,f,g\}\}$
is not, because the blocks cannot be combined to form a set of size $2$ or $5$.
Let $P_n$ denote the number of practical set partitions and let $I_n=B_n-P_n$, the number of \emph{impractical}
set partitions, of a set of $n$ elements. Define $P_0=B_0=1$.
Partitions which are $1$-rough are clearly impractical, since the blocks can not be combined to form a set of size $1$. 
Theorem \ref{thm2} shows that, as $n$ grows, almost all impractical set partitions are $1$-rough. 

\begin{theorem}\label{thm2}
We have
$$I_n=B_{n,1}  \left(1+O\left(\exp\left(-\frac{(\log n)^2}{3} \right)\right)\right).$$
\end{theorem}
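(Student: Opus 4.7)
Since every $1$-rough partition is impractical, $I_n \ge B_{n,1}$, and proving the theorem amounts to showing that the difference $D_n := I_n - B_{n,1}$, which counts impractical partitions having at least one singleton, satisfies $D_n = O(B_{n,1}\exp(-(\log n)^2/3))$. The structural device I would use is the first ``gap'' in the sorted block sizes. If the block sizes of such a partition, sorted increasingly, are $1 = a_1 \le a_2 \le \ldots \le a_l$, then the well-known practicality criterion ($a_{i+1}\le 1 + a_1 + \ldots + a_i$ for every $i$) must fail first at some smallest index $j \ge 1$; setting $T = a_1 + \ldots + a_j$, the first $j$ blocks together partition $T$ elements and every remaining block has size at least $T+2$. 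Choosing the $T$ small-part elements ($\binom{n}{T}$ ways), partitioning them freely (at most $B_T$ ways, an overcount), and partitioning the remaining $n-T$ elements into blocks of size $\ge T+2$ ($B_{n-T,T+1}$ ways), we obtain
\[
D_n \;\le\; \sum_{T=1}^{\lfloor (n-2)/2 \rfloor} \binom{n}{T}\, B_T\, B_{n-T,T+1}.
\]

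For values of $T$ with $T+1 \le \delta_1 r$, Corollary \ref{core} gives $B_{n-T,T+1} \sim B_{n-T}\exp(-\alpha_{T+1}(r))$ with $\alpha_{T+1}(r) = \sum_{k=1}^{T+1} r^k/k!$. Combining with the ratio $\binom{n}{T}B_{n-T}/B_n \sim r^T/T!$, which follows by iterating $B_{n-1}/B_n \sim r/n$ (a consequence of Corollary \ref{cor0}), and with $B_{n,1} \sim B_n\exp(-r)$, the $T$-th term relative to $B_{n,1}$ is asymptotic to
\[
\frac{B_T\,r^T}{T!}\exp\!\Bigl(-\sum_{k=2}^{T+1}\frac{r^k}{k!}\Bigr).
\]

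The $T=1$ term equals $r\exp(-r^2/2)$; by \eqref{Wasymp}, one has $r^2/2 - \log r \ge (\log n)^2/3$ for all sufficiently large $n$, so this term is precisely $O(\exp(-(\log n)^2/3))$, delivering the target bound. For each $T \ge 2$ the additional factor $\exp(-r^{T+1}/(T+1)!)$ overwhelms the polynomial factor $B_T r^T/T!$, so consecutive terms drop geometrically and the sum over $2 \le T+1 \le \delta_1 r$ is of strictly smaller order.

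The main obstacle is the tail $T \ge \delta_1 r$, where Corollary \ref{core} no longer applies to $B_{n-T,T+1}$ directly. I would handle this by monotonicity: for any $m \le T$, $B_{n-T,T+1} \le B_{n-T,m}$, and taking $m = \lfloor \delta_1 r\rfloor - 1$ brings the right-hand side within the hypotheses of Corollary \ref{core}. The resulting bound features $\exp(-\alpha_{\lfloor\delta_1 r\rfloor - 1}(r))$, and since $\alpha_{\lfloor\delta_1 r\rfloor - 1}(r)$ grows very rapidly in $r$ (its last term alone is $r^{\delta_1 r}/(\delta_1 r)! \sim \exp(c r)/\sqrt{r}$ for some $c > 0$), this exponential suppression easily absorbs the crude $\binom{n}{T} B_T \le n^T$ factor and the at most $n/2$ terms of the tail. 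Combining the two ranges yields $D_n = O(B_{n,1}\exp(-(\log n)^2/3))$, completing the proof.
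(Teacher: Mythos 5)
Your combinatorial decomposition is essentially the paper's: identifying the first failure of the sum-of-smaller-parts criterion and writing the resulting upper bound $I_n - B_{n,1} \le \sum_{k\ge 1}\binom{n}{k}B_k B_{n-k,k+1}$ is exactly the content of Lemmas \ref{lp0}--\ref{lp2} (with $P_k \le B_k$). Your treatment of the small-$T$ range (say $T \le (\log n)^{3/2}$) via Theorem \ref{thm1}/Corollary \ref{core} is also sound, and the $T=1$ term $r e^{-r^2/2}$ correctly yields the $(\log n)^2/3$ exponent.

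The gap is in the tail. Replacing $B_{n-T,T+1}$ by $B_{n-T,m}$ with $m = \lfloor\delta_1 r\rfloor - 1$ and invoking Corollary \ref{core} gives a suppression factor $\exp(-\alpha_m(r(n-T)))$, and the largest term of $\alpha_m$ is of size $r^{\delta_1 r}/(\delta_1 r)! \asymp e^{\delta_1 r(1-\log\delta_1)}$, which by the defining property $\eta_1(1-\log\eta_1)=1/2$ and $\delta_1 < \eta_1$ is $o(e^{r/2}) = o(\sqrt{n/r})$. So the entire suppression you extract from the rough-partition constraint is at most $\exp(O(\sqrt{n}))$. But for $T$ of order $n$ (and the sum runs up to $T \approx n/2$), the remaining factor is of order $\exp(cn/r) = \exp(cn/\log n)$: using $g(n) = n\log r - n/r$ and $g'(n) = \log r$, one finds $\log\bigl(\binom{n}{T}B_T B_{n-T}/B_{n,1}\bigr) = g(n) - g(T) - g(n-T) + r + O(\log n) \approx c\, n/\log n$ for $T \asymp n$, which dwarfs $\sqrt{n}$. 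So for, say, $T \asymp n/3$ your bound on the $T$-th term is $\exp(+cn/\log n)$ times the target, not small. The monotonicity trick loses far too much.

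What the paper does differently, and what you would need, is a bound on $B_{n,m}$ that is \emph{not} restricted to $m \le \delta_1 r$. Lemma \ref{lp3} is a crude upper bound $B_{n,m} \le n!\exp(e^r-\beta_m(r))/r^n$ (just from bounding the contour integral trivially), valid for every $m\ge 0$, so one can keep $m = T+1$. For $T > (\log n)^{3/2}$, $\beta_{T+1}(r(n-T))$ is already very close to $e^{r(n-T)} = (n-T)/r(n-T)$, which is of order $n/\log n$; combined with a monotonicity argument in $T$ this beats the $\exp(cn/\log n)$ growth and yields the bound $\exp(-0.03\, n/\log n)$ for the middle range. Without a bound of this strength — linear in $n/\log n$, not $\sqrt{n}$ — the tail cannot be controlled, and the proof does not close.
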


Combining Theorem \ref{thm2} with Corollary \ref{cor2} yields an estimate for $I_n/B_n$:
\begin{corollary}\label{cor21}
The proportion of impractical set partitions of $n$ objects is
$$ \frac{I_n}{B_n} = \frac{r}{n}\left(1+O\left(\frac{\log n}{n}\right)\right) 
= \frac{\log (n/\log n) }{n}\left(1+O\left(\frac{\log\log n}{(\log n)^2}\right)\right) .$$
\end{corollary}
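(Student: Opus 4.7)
The plan is to derive this estimate as an immediate corollary by combining the two previously stated results, since Theorem \ref{thm2} compares $I_n$ to $B_{n,1}$ multiplicatively and Corollary \ref{cor2} already provides the asymptotic for $B_{n,1}/B_n$. Concretely, I would factor
$$\frac{I_n}{B_n} = \frac{I_n}{B_{n,1}} \cdot \frac{B_{n,1}}{B_n}.$$

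First I would apply Theorem \ref{thm2} to the first factor, which gives $I_n/B_{n,1} = 1 + O(\exp(-(\log n)^2/3))$. Next I would insert the two forms of the estimate for $B_{n,1}/B_n$ supplied by Corollary \ref{cor2}. Multiplying and using that $\exp(-(\log n)^2/3)$ decays faster than any power of $1/\log n$, and in particular is dominated by both $(\log n)/n$ and $(\log\log n)/(\log n)^2$ for all sufficiently large $n$, the error from Theorem \ref{thm2} is absorbed into the error terms already present. This yields the claimed identities
$$\frac{I_n}{B_n} = \frac{r}{n}\left(1 + O\left(\frac{\log n}{n}\right)\right) = \frac{\log(n/\log n)}{n}\left(1 + O\left(\frac{\log\log n}{(\log n)^2}\right)\right).$$

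There is no real obstacle here; the only thing to verify carefully is the bookkeeping that $\exp(-(\log n)^2/3)$ is genuinely negligible compared to the error terms from Corollary \ref{cor2}, which is straightforward from \eqref{Wasymp}. Thus the proof amounts to a short chain of substitutions.
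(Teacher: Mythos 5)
Your proposal is correct and is exactly the route the paper intends: the corollary is introduced in the text with the phrase ``Combining Theorem \ref{thm2} with Corollary \ref{cor2} yields an estimate for $I_n/B_n$,'' and your factorization $I_n/B_n = (I_n/B_{n,1})\cdot(B_{n,1}/B_n)$ together with the observation that $\exp(-(\log n)^2/3)$ is dominated by the error terms in Corollary \ref{cor2} is precisely that combination, just written out in detail.
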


\begin{table}[h]
  \begin{tabular}{ | c | c |c|c| }
    \hline
    $n$ & $I_n/B_n$ & $r/n$ & Relative Error \\ \hline  
    $2^2$ & 0.533333 &0.300542 &-0.436484 \\ \hline
    $2^4$ & 0.141507 &0.128325 &-0.093156 \\ \hline
    $2^6$ & 0.046743 & 0.047583 & 0.017954  \\ \hline
     $2^8$ & 0.015907 & 0.016123 & 0.013594    \\ \hline
     $2^{10}$ & 0.005122 & 0.005146 & 0.004670  \\ \hline
     \end{tabular}
     \medskip
     
     \caption{Proportion of impractical set partitions: numerical examples of Corollary \ref{cor21}, showing the ratio $I_{n}/B_n$, the approximation $r/n$ and the relative error $(r/n)/(I_n/B_n)-1$.}\label{table2}
\end{table}

Note that the relative errors in Table \ref{table2} are of a similar size as the main term $r/n$,
consistent with the first equation in Corollary \ref{cor21}. 

Since $P_n=B_n-I_n$, we find that almost all set partitions are practical,
as in the case of integer partitions (see \cite{DN,ES}).
\begin{corollary}
The proportion of practical set partitions of $n$ objects is
$$\frac{P_n}{B_n}=1-\frac{r}{n}+O\left(\frac{(\log n)^2}{n^2}\right)
=1-\frac{\log (n/\log n) }{n}+O\left(\frac{\log\log n}{n\log n}\right).$$
\end{corollary}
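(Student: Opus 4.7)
The plan is very short: this corollary is essentially just bookkeeping applied to Corollary \ref{cor21}. First I would write $P_n/B_n = 1 - I_n/B_n$, which follows directly from the definition $P_n = B_n - I_n$.

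Next I would substitute the first expression from Corollary \ref{cor21}, giving
$$\frac{P_n}{B_n} = 1 - \frac{r}{n}\left(1+O\left(\frac{\log n}{n}\right)\right) = 1 - \frac{r}{n} + O\left(\frac{r \log n}{n^2}\right).$$
Since $r = \log n - \log\log n + O(\log\log n/\log n)$ by \eqref{Wasymp}, in particular $r = O(\log n)$, the error term collapses to $O((\log n)^2/n^2)$, yielding the first claimed expression.

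For the second expression, I would instead substitute the second form from Corollary \ref{cor21}, namely $I_n/B_n = \frac{\log(n/\log n)}{n}(1+O(\log\log n/(\log n)^2))$. The resulting error is
$$O\left(\frac{\log(n/\log n)}{n} \cdot \frac{\log\log n}{(\log n)^2}\right) = O\left(\frac{\log\log n}{n \log n}\right),$$
using once more that $\log(n/\log n) = O(\log n)$.

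There is no real obstacle here; the only thing to be careful about is absorbing the main term of the error into the leading $r/n$ correctly so that the remainder is genuinely of smaller order than the displayed term. Since both steps are straightforward asymptotic manipulations using only Corollary \ref{cor21} and the Lambert-$W$ asymptotic \eqref{Wasymp}, the whole argument should fit in a couple of lines.
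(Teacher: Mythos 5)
Your argument is exactly the one the paper intends: the corollary is an immediate consequence of Corollary \ref{cor21} via $P_n/B_n = 1 - I_n/B_n$, with the error terms simplified using $r = O(\log n)$ and $\log(n/\log n) = O(\log n)$. Both of your error-term absorptions are carried out correctly, and this matches the paper's (implicit) derivation.
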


\section{Proof of Theorem \ref{thm1}}

The following lemma gives a recursive formula for the sequence $B_{n,\mathcal{S}}$, which we used to generate the 
numerical examples in the tables. It is also the basis for deriving the exponential generating function in Lemma \ref{lem2}. 
\begin{lemma}\label{lem1}
For $n\ge 1$,
$$ B_{n,\mathcal{S}} =  \sum_{\substack{0\le k \le  n-1  \\ n-k \notin \mathcal{S}}} \binom{n-1}{k} B_{k,\mathcal{S}}.$$
\end{lemma}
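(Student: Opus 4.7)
The plan is to prove Lemma \ref{lem1} by a standard classification argument based on the block containing a distinguished element, say element $1$ of the underlying $n$-set. Every set-partition counted by $B_{n,\mathcal{S}}$ places element $1$ into exactly one block, and that block has some size $s$ with $1 \le s \le n$ and $s \notin \mathcal{S}$.

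First I would fix $s$ with $s \notin \mathcal{S}$ and count the number of valid partitions in which the block containing element $1$ has size $s$. To build such a partition, one chooses the remaining $s-1$ elements of that block from the other $n-1$ elements, which can be done in $\binom{n-1}{s-1}$ ways, and then partitions the remaining $n-s$ elements into blocks whose sizes all lie outside $\mathcal{S}$, which can be done in $B_{n-s,\mathcal{S}}$ ways (using the convention $B_{0,\mathcal{S}}=1$ when $s=n$). Summing over admissible $s$ gives
\begin{equation*}
  B_{n,\mathcal{S}} = \sum_{\substack{1\le s\le n \\ s\notin \mathcal{S}}} \binom{n-1}{s-1} B_{n-s,\mathcal{S}}.
\end{equation*}

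Next I would make the substitution $k = n-s$, so that $s = n-k$ runs over $\{1,\ldots,n\}$ as $k$ runs over $\{0,\ldots,n-1\}$, the constraint $s \notin \mathcal{S}$ becomes $n-k \notin \mathcal{S}$, and $\binom{n-1}{s-1}=\binom{n-1}{n-s}=\binom{n-1}{k}$. This yields exactly the stated formula.

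There is essentially no obstacle here; the only minor care needed is the boundary case $s=n$, which corresponds to $k=0$ and uses the convention $B_{0,\mathcal{S}}=1$ established just before the statement of the lemma, together with checking that the condition $n \notin \mathcal{S}$ is correctly encoded by $n-k \notin \mathcal{S}$ at $k=0$.
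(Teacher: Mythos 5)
Your proof is correct and uses the same idea as the paper: classify partitions by the block containing element $1$. The paper parametrizes directly by $k$, the number of elements outside that block, whereas you parametrize by the block size $s$ and then substitute $k=n-s$; this is a trivial reindexing of the identical argument.
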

\begin{proof}
We count the number of partitions of the set $\{1,2,\ldots,n\}$, with no block sizes in $\mathcal{S}$. 
For such a partition, let $k$ be the number of all elements of $\{1,2,\ldots,n\}$
which are not in the block that contains $1$. There are  $\binom{n-1}{k}$ ways of selecting those elements
from $\{2,3,\ldots, n\}$, and for each such selection there are $B_{k,\mathcal{S}}$ set partitions of those elements, with
no block size in $\mathcal{S}$.
Note that the block containing $1$ has $n-k$ elements, so $n-k \notin \mathcal{S}$.
\end{proof}

Let
$$G_\mathcal{S}(z)= \sum_{n =0}^\infty \frac{B_{n,\mathcal{S}}}{n!} \, z^n,$$
the exponential generating function for the sequence $B_{n,\mathcal{S}}$.

\begin{lemma}\label{lem2}
We have
$$ G_\mathcal{S}(z)=\exp\bigl(e^z -1 -\alpha(z)\bigr).$$
\end{lemma}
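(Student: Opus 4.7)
The plan is to turn the recursion of Lemma \ref{lem1} into a first-order linear ODE for $G_\mathcal{S}$ and solve it directly.

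First I would rewrite the recursion by substituting $j=n-k$, which gives, for $n\ge 1$,
$$B_{n,\mathcal{S}} = \sum_{\substack{1\le j\le n \\ j\notin \mathcal{S}}} \binom{n-1}{j-1} B_{n-j,\mathcal{S}}.$$
This index change isolates the size $j$ of the block containing the element $1$ and is the form best suited to comparing with a convolution of exponential generating functions.

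Next I would compute $G_\mathcal{S}'(z)=\sum_{n\ge 1} B_{n,\mathcal{S}} z^{n-1}/(n-1)!$, substitute the recursion, and swap the order of summation. Setting $m=n-j$ factors the double sum into a product, yielding
$$G_\mathcal{S}'(z) = \left(\sum_{\substack{j\ge 1 \\ j\notin \mathcal{S}}} \frac{z^{j-1}}{(j-1)!}\right) G_\mathcal{S}(z).$$
Recognizing the first factor as the derivative of $\sum_{j\ge 1,\, j\notin \mathcal{S}} z^j/j! = e^z-1-\alpha(z)$ shows that
$$G_\mathcal{S}'(z) = \bigl(e^z-\alpha'(z)\bigr)\, G_\mathcal{S}(z).$$

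Finally I would solve this separable ODE. Integrating gives $\log G_\mathcal{S}(z) = e^z-\alpha(z)+C$, and the initial condition $G_\mathcal{S}(0)=B_{0,\mathcal{S}}=1$ together with $\alpha(0)=0$ forces $C=-1$, so $G_\mathcal{S}(z)=\exp(e^z-1-\alpha(z))$, as claimed.

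The derivation is essentially routine manipulation of power series, so I do not anticipate a real obstacle; the only point that requires a moment of care is the interchange of summations and the re-indexing used to recognize the product structure. Formally everything takes place at the level of formal power series, so convergence is a non-issue; alternatively, one could appeal directly to the exponential formula, since $B_{n,\mathcal{S}}$ counts set partitions whose blocks are structures with EGF $e^z-1-\alpha(z)$, but the ODE route above has the advantage of using only what has been established in Lemma \ref{lem1}.
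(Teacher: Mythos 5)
Your proposal is correct and follows exactly the route the paper sketches: derive the first-order ODE $G_\mathcal{S}'(z)=(e^z-\alpha'(z))G_\mathcal{S}(z)$ from the recursion of Lemma \ref{lem1} and integrate with the initial condition $G_\mathcal{S}(0)=1$. You even note the same alternative (the exponential formula, i.e.\ \cite[Prop.~II.2]{FS}) that the paper offers, so there is nothing to add.
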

\begin{proof}
It is a standard exercise to derive the differential equation
$$G'_\mathcal{S}(z)=G_\mathcal{S}(z) \left(e^z- \alpha'(z)\right)$$
from the recursive formula in Lemma \ref{lem1}. Solving that equation for $G_\mathcal{S}(z)$ yields the desired result.

Alternatively, the result follows from the general principle in \cite[Proposition II.2]{FS}.
\end{proof}

We will need the following upper bound for the $i$-th derivative $\alpha^{(i)}(r)$:

\begin{lemma}\label{alpha2}
Let $\delta_1, \delta_2$ be as in Theorem \ref{thm1}, and let $I, J \ge 0$ be fixed integers. 
Uniformly for sets $\mathcal{S}$ with $\mathcal{S} \cap [\delta_1 r , \delta_2 r] = \emptyset$, we have 
$$ \alpha^{(i)}(r) \ll_{I,J} \frac{e^{r/2}}{r^J} \qquad (0\le i \le I).$$
\end{lemma}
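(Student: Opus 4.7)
My plan is to reduce $\alpha^{(i)}(r)$ to a sum of terms $r^m/m!$ over indices $m$ that avoid a middle interval around $r$, then combine Stirling's approximation on the low-$m$ range with a geometric tail bound on the high-$m$ range.

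First I would differentiate term-by-term and shift the index, writing
$$\alpha^{(i)}(r) \;=\; \sum_{\substack{k\in\mathcal{S}\\ k\ge i}} \frac{r^{k-i}}{(k-i)!} \;=\; \sum_{\substack{m\ge 0\\ m+i\in\mathcal{S}}} \frac{r^m}{m!}.$$
The hypothesis $\mathcal{S}\cap [\delta_1 r,\delta_2 r]=\emptyset$ forces each $m$ in the sum to satisfy either $m\le \delta_1 r-i$ or $m\ge \delta_2 r-i$. Introduce the Stirling exponent $f(\beta):=\beta(1-\log\beta)$, which is increasing on $(0,1)$, decreasing on $(1,\infty)$, has maximum $f(1)=1$, and (by definition of $\eta_1,\eta_2$) satisfies $f(\eta_1)=f(\eta_2)=1/2$. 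The inequality $m!\ge \sqrt{2\pi m}\,(m/e)^m$ yields $r^m/m! \le e^{r f(m/r)}/\sqrt{2\pi m}$ for $m\ge 1$.

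Next I would pick auxiliary constants $\delta_1<\delta_1'<\eta_1$ and $\eta_2<\delta_2'<\delta_2$; since $f$ is strictly monotone on each side of $1$, we get $f(\delta_1'),f(\delta_2')<1/2$, so $c:=\min\{\tfrac12-f(\delta_1'),\tfrac12-f(\delta_2')\}>0$. For $r$ sufficiently large (in terms of $I$), we have $\delta_1 r-i\le \delta_1' r$ and $\delta_2 r-i\ge \delta_2' r$, so every $m$ in the sum lies in $[0,\delta_1' r]\cup[\delta_2' r,\infty)$. On the small range, monotonicity of $f$ on $(0,1)$ bounds each term by $O(e^{r f(\delta_1')})=O(e^{r/2-cr})$, and summing over $O(r)$ indices contributes at most $O(r\,e^{r/2-cr})$. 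On the large range, the successive ratio satisfies $r/(m+1)\le 1/\delta_2'<1$, so the whole tail is dominated by a geometric series whose first term is $r^{m_0}/m_0!$ with $m_0=\lceil \delta_2' r\rceil$; Stirling bounds this first term, hence the tail, by $O(e^{r f(\delta_2')})=O(e^{r/2-cr})$.

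Combining the two ranges gives $\alpha^{(i)}(r)\ll (1+r)\,e^{r/2}\,e^{-cr}$, and since $r^J e^{-cr}$ is bounded for every fixed $J$, this is $\ll_{I,J} e^{r/2}/r^J$, uniformly in $i\le I$ and in admissible $\mathcal{S}$ (for bounded $r$ the claim is trivial, absorbed into the implied constant). The only subtlety is the shift of endpoints by $i$, which would spoil the argument if we tried to use $\delta_1,\delta_2$ directly; the fix is to strictly shrink the avoidance interval to $[\delta_1',\delta_2']$ and restrict to $r\ge r_0(I)$. The large-$m$ step is the least routine because it needs the strict inequality $\delta_2'>1$ to guarantee a convergent geometric tail.
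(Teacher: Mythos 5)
Your argument is correct and structurally the same as the paper's: split the sum at the two ends of the avoided interval, identify the exponent $f(\beta)=\beta(1-\log\beta)$, and use that $f(\delta_1), f(\delta_2) < f(\eta_1)=f(\eta_2)=1/2$ to get a bound $\ll e^{(1/2-\varepsilon)r}$ on each piece. The only technical divergence is that the paper bounds both ranges by the direct factoring $\sum_k (m^k/k!)(r/m)^k \le (r/m)^{m-i}\sum_k m^k/k! < (r/m)^{m-i}e^m$ (and similarly with $M$), which absorbs the $i$-shift cleanly and avoids both Stirling and the auxiliary constants $\delta_1',\delta_2'$ and the geometric-tail step; your route with Stirling plus an $O(r)$ count and a geometric tail reaches the same conclusion.
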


\begin{proof}
Write $m=\delta_1 r$ and $M=\delta_2 r$. Then 
$$\alpha^{(i)}(r)\le  \sum_{i\le k \le m} \frac{r^{k-i}}{(k-i)!} 
+  \sum_{k > M} \frac{r^{k-i}}{(k-i)!} =s_1+s_2, $$ 
say. We have 
\begin{equation}\label{alphaineq}
s_1 = \sum_{0\le k \le m-i} \frac{m^{k}}{k!}  \left(\frac{r}{m}\right)^{k} 
\le  \left(\frac{r}{m}\right)^{m-i}  \sum_{0\le k \le m-i} \frac{m^k}{k!} <  \left(\frac{r }{m}\right)^{m-i}e^m,
\end{equation}
hence
$$ \log s_1 < m (1-\log (m/r)).$$
Since $m/r=\delta_1 < \eta_1$,
the definition of $\eta_1$ in Theorem \ref{thm1} implies
 $$ (m/r)(1-\log (m/r)) < 1/2-\varepsilon_1,$$
for some $\varepsilon_1 >0$.
Combining the last two inequalities shows that 
$$s_1 < \exp((1/2-\varepsilon_1)r)\ll_J \exp(r/2)/r^J .$$
Similarly,
$$
s_2 = \sum_{k > M-i} \frac{M^{k}}{k!}  \left(\frac{r}{M}\right)^{k} 
\le  \left(\frac{r}{M}\right)^{M-i}  \sum_{k > M-i} \frac{M^k}{k!} < \delta_2^i  \left(\frac{r e}{M}\right)^{M},
$$
hence
$$ \log s_2 < M (1-\log (M/r)) + i \log \delta_2.$$
Since $M/r=\delta_2 > \eta_2$,
the definition of $\eta_2$ in Theorem \ref{thm1} implies
 $$ (M/r)(1-\log (M/r)) < 1/2-\varepsilon_2,$$
for some $\varepsilon_2 >0$.
With $i\le I$, we get 
$$s_2 \ll_I \exp((1/2-\varepsilon_2)r)\ll_{I,J} \exp(r/2)/r^J .$$
\end{proof}

\begin{proof}[Proof of Theorem \ref{thm1}]
Let $r$ be given by \eqref{req}.
Cauchy's residue theorem yields
\begin{equation}\label{Cauchy}
 \frac{B_{n,\mathcal{S}}}{n!} =\frac{1}{2\pi i} \int_{|z|=r} \frac{G_\mathcal{S}(z)}{z^{n+1}} \, dz 
=  \frac{1}{2\pi i} \int_{|z|=r} \frac{\exp\left(e^z -1 -\alpha(z)\right) }{z^{n+1}} \, dz. 
\end{equation}
Writing $z=re^{i\theta}$, we obtain
\begin{equation}\label{BInt}
  \frac{B_{n,\mathcal{S}}}{n!} =\frac{\exp\left(e^r - 1 - \alpha(r)\right)}{2 \pi r^n}
\int_{-\pi}^{\pi} \exp\left(h(\theta) \right)  d \theta,
\end{equation}
where 
$$ h(\theta)= e^{re^{i\theta}}-e^r + \alpha(r)-\alpha\left(r e^{i\theta}\right)-i\theta n.$$

Our first task is to show that the contribution to the last integral from $\delta \le |\theta| \le \pi$ is negligible, where
$$ \delta =\sqrt{12 (1+\alpha(r)) e^{-r}}.$$
We have 
$$ |\exp(h(\theta))| = \exp(\re(h(\theta)))\le \exp( \re e^{re^{i\theta}}-e^r + 2\alpha(r)).$$
Since $\re e^z= e^{\re z}\cos(\im z)$,
$$ \re e^{r(e^{i\theta}-1)} = e^{r(\cos \theta -1)}\cos(r\sin\theta)\le e^{r(\cos \theta -1)}\le e^{r(\cos \delta -1)},$$
for $\delta \le |\theta| \le \pi$.
Now $\cos \delta \le 1-\delta^2/3$ for $0\le \delta \le 2$, and $e^x \le 1+x/2$ for $-1\le x \le 0$. Thus
$$ \re e^{r(e^{i\theta}-1)} \le 1+ r(\cos \delta -1)/2 \le 1-\frac{r\delta^2}{6},$$
and
$$ \re  e^{re^{i\theta}}-e^r = e^r \left( \re e^{r(e^{i\theta}-1)} -1\right)\le - \frac{e^r r\delta^2}{6}=-2r(1+\alpha(r)), $$
for $\delta \le |\theta| \le \pi$.
Hence
$$ |\exp(h(\theta))| \le \exp\bigl(-2r(1+\alpha(r))+2\alpha(r)\bigr)\ll \exp(-2r).$$
The contribution to the integral in \eqref{BInt} from $\delta \le |\theta| \le \pi$ is thus 
\begin{equation}\label{tails}
\left| \int_{\delta \le |\theta|\le \pi}  \exp\left(h(\theta) \right)  d \theta \right|
  \le \int_{\delta \le |\theta|\le \pi} \left| \exp\left(h(\theta) \right) \right|  d \theta
\ll e^{-2r},
\end{equation}
which is acceptable.

The second task is to approximate $h(\theta)$ for $|\theta|\le \delta$ by a Taylor polynomial and show that the error term is negligible.
We have
$$h(\theta)=-i \alpha'(r) r \theta - A\frac{\theta^2}{2} - iB \frac{\theta^3}{6}+O(r^4 e^r \theta^4),$$
where
$$A=r^2 e^r(1-\alpha''(r)e^{-r})+ r e^r(1-\alpha'(r)e^{-r}),$$
and 
$$B=e^r(r^3+3r^2+r)-(r^3\alpha'''(r)+3r^2\alpha''(r)+r\alpha'(r))=O(e^r r^3).$$
Since $e^{-it}=1-it+O(t^2)$ for all real $t$, we can write $\exp(h(\theta))$ as
$$e^{- A\theta^2/2} 
\left(1-   i \alpha'(r) r \theta -  iB \theta^3/6 +O((r\alpha'(r))^2 \theta^2+B^{2} \theta^6)   \right)
\left(1+O(r^4 e^r \theta^4)\right),
$$
where the last error term is justified since $r^4 e^r \theta^4=O(1)$ for $|\theta|\le \delta$, by Lemma \ref{alpha2}.
Multiplying the two factors, and appealing to $r^4 e^r \theta^4=O(1)$, shows that $\exp(h(\theta))$ equals
$$
e^{- A\theta^2/2} \left(1-   i \alpha'(r) r \theta -  iB \theta^3/6 
+O\left((r\alpha'(r))^2  \theta^2 + B^2 \theta^6 + r^4 e^r \theta^4 \right)\right)
$$
and 
$$ \int_{-\delta}^{\delta} \exp\left(h(\theta) \right)  d \theta
=\int_{-\delta}^{\delta} e^{- A\theta^2/2} d \theta + 0 + 0 +E,$$
where
$$
E\ll \int_{-\delta}^{\delta} e^{- A\theta^2/2} \left((r\alpha'(r))^2  \theta^2 + B^2 \theta^6 + r^4 e^r \theta^4 \right)d \theta .
$$
The even central moments of a normal distribution with variance $1/A$ are given by (see \cite[p. 25]{HB})
$$ \sqrt{\frac{A}{2 \pi}}  \int_{-\infty}^{\infty}  e^{- A\theta^2/2} \theta^{2k} d \theta
=\frac{(2k)!}{(2A)^k k!} \qquad (k\ge 0).
$$
Since
  $A \gg e^r r^2$, by Lemma \ref{alpha2}, and $B\ll e^r r^3$, we obtain
$$
E\sqrt{A} \ll \frac{(r\alpha'(r))^2  }{A} + \frac{B^2}{A^3} + \frac{r^{4} e^r}{A^2} 
\ll (\alpha'(r))^2 e^{-r}+ e^{-r}+e^{-r}
$$
and 
$$  \int_{-\delta}^{\delta} \exp\left(h(\theta) \right)  d \theta =\int_{-\delta}^{\delta} e^{- A\theta^2/2} d \theta +
O\left(\frac{1+(\alpha'(r))^2}{\sqrt{A}e^r}\right).
$$

Our third task is to extend the last integral to $(-\infty, \infty)$. We have
$$
\int_{\delta}^{\infty} e^{- A\theta^2/2} d \theta
\le \int_{\delta}^{1} e^{- A\theta^2/2} d \theta + \int_{1}^{\infty} e^{- A\theta/2} d \theta
\le  e^{- A\delta^2/2} + \frac{ e^{- A/2}}{A/2} \ll e^{-2r}.
$$
Thus,
\begin{equation}\label{central}
\begin{split}
 \int_{-\delta}^{\delta} \exp\left(h(\theta) \right)  d \theta 
 &=\int_{-\infty}^{\infty} e^{- A\theta^2/2} d \theta +O\left(\frac{1+(\alpha'(r))^2}{\sqrt{A}e^r}\right)\\
 &= \sqrt{\frac{2\pi}{A}}\left(1+O\left(\frac{1+(\alpha'(r))^2}{e^r}\right)\right).
\end{split}
\end{equation}

To approximate the quantity $A$ by $r(r+1)e^r $, we need to estimate $\alpha''(r)$ in terms of $\alpha'(r)$. We have
$$  \sum_{\substack{1\le k-1 \le 3r \\ k \in \mathcal{S}}} \frac{r^{k-2}}{(k-2)!} 
\le  \sum_{\substack{1\le k-1 \le 3r \\ k \in \mathcal{S}}} \frac{r^{k-2}}{(k-2)!} \cdot \frac{3r}{k-1} 
\le 3\alpha'(r)
$$
and
$$
 \sum_{\substack{k-1 > 3r \\ k \in \mathcal{S}}} \frac{r^{k-2}}{(k-2)!} 
 \le  \sum_{\substack{k-1 > 3r \\ k \in \mathcal{S}}} \frac{(3r)^{k-2}}{(k-2)!} \left(\frac{1}{3}\right)^{3r-1} 
 < e^{3r}  \left(\frac{1}{3}\right)^{3r-1} \le 3.
$$
Thus, $\alpha''(r) \le 3 + 3\alpha'(r)$, and 
\begin{equation}\label{Aestimate}
  A=r(r+1)e^r \left(1+O\left(\frac{1+\alpha'(r)}{e^r} \right)\right).
\end{equation}
Theorem \ref{thm1} now follows from combining the estimates \eqref{tails}, \eqref{central} and \eqref{Aestimate} with equation \eqref{BInt}.
\end{proof}

\section{Proof of Theorem \ref{thm2}}

Sierpinski \cite{Sier} and Stewart \cite{Stew} independently gave the characterization of practical numbers in terms of their prime factors.
The following analogue characterizes practical set partitions in terms of their block sizes. 

\begin{lemma}\label{lp0}
A set partition with $l$ blocks of sizes $a_1 \le a_2 \le \ldots \le a_l$
is practical if and only if 
\begin{equation}\label{condp}
a_i \le 1+ \sum_{1\le j < i} a_j  \qquad (1\le i \le l).
\end{equation}
\end{lemma}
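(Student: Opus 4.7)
The plan is to prove both directions by a direct adaptation of the Sierpinski–Stewart argument for practical numbers, since the subset-sum condition \eqref{condp} is the exact combinatorial analogue of the prime-power condition in that classical result.

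For the necessity direction, I would argue by contrapositive. Assume \eqref{condp} fails and let $i$ be the smallest index with $a_i > 1 + \sum_{j<i} a_j$. Put $s = 1 + \sum_{j<i} a_j$. Then $1 \le s \le a_i \le n$, so $s$ lies in the range of sizes that a practical partition must realize. Any subset sum representing $s$ cannot use only blocks with index $<i$, since their total equals $s-1 < s$; but it also cannot use any block $a_k$ with $k \ge i$, because the blocks are sorted and therefore $a_k \ge a_i > s$ forces the sum to exceed $s$. Hence $s$ is not representable, contradicting practicality.

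For the sufficiency direction, I would prove by induction on $l$ the stronger statement that, if \eqref{condp} holds for $1 \le i \le l$, then the set of subset sums of $a_1,\dots,a_l$ equals $\{0,1,\ldots,N_l\}$, where $N_l=a_1+\cdots+a_l$. The base case $l=1$ forces $a_1 \le 1$, hence $a_1=1$, and the subset sums are $\{0,1\}$. For the inductive step, the subset sums of $a_1,\dots,a_l$ are the union of the subset sums of $a_1,\dots,a_{l-1}$ with their $a_l$-shifts. By induction these two sets are $\{0,1,\ldots,N_{l-1}\}$ and $\{a_l,a_l+1,\ldots,a_l+N_{l-1}\}$, and condition \eqref{condp} at $i=l$ gives $a_l \le N_{l-1}+1$, so the two intervals overlap (or abut) and their union is $\{0,1,\ldots,N_l\}$. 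Taking $l$ equal to the total number of blocks and $N_l=n$ yields practicality.

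There is essentially no obstacle: the only subtlety is to remember that the hypothesis $a_1 \le a_2 \le \cdots \le a_l$ is what makes the necessity argument work (it is what guarantees $a_k \ge a_i$ for $k \ge i$), while the sufficiency argument uses only the inequalities \eqref{condp} themselves and not the ordering. I would note this at the end so that the lemma can be applied in either form.
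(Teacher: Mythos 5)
Your proposal is correct and uses essentially the same argument as the paper: necessity by noting that $s = 1 + \sum_{j<i} a_j$ is unrepresentable when \eqref{condp} fails at $i$, and sufficiency by induction on $l$, merging the interval of subset sums $\{0,\ldots,N_{l-1}\}$ with its $a_l$-shift using $a_l \le N_{l-1}+1$. Your closing remark that the ordering is needed only for necessity is a nice, accurate observation, but otherwise the two proofs coincide.
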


\begin{proof}
Condition \eqref{condp} is clearly necessary: if $a_i > 1+ \sum_{1\le j < i} a_j$ for some $1\le i \le l$, 
then there is no set of size $1+ \sum_{1\le j < i} a_j$ which is the union of different blocks.

To show that \eqref{condp} is sufficient, we proceed by induction on $l$. The case $l=1$ is obvious.
Assume that \eqref{condp} implies that the corresponding set partition 
with block sizes $a_1\le \ldots \le a_l$ is practical for some $l\ge 1$. 
Assume a set partition with block sizes $a_1\le \ldots \le a_l \le a_{l+1}$ satisfies \eqref{condp}, with $l$ replaced by $l+1$. 
The set of sizes of subsets obtained from combining different blocks is 
$$ A:=\left\{ \sum_{i=1}^{l+1} \varepsilon_i a_i \ : \ \varepsilon_i\in \{0,1\}\right\} 
=  \left\{ \sum_{i=1}^{l} \varepsilon_i a_i \ : \ \varepsilon_i\in \{0,1\}\right\} + \left\{ 0 , a_{l+1}\right\}$$
By the inductive hypothesis,  
$ A=\left\{1, 2, 3, \ldots,  \sum_{i=1}^{l} a_i \right\}+\left\{ 0 , a_{l+1}\right\},$
and since $a_{l+1} \le 1+ \sum_{1\le j < l+1} a_j$, we have
$   A=\left\{1, 2, 3, \ldots,  \sum_{i=1}^{l+1} a_i \right\}.$
\end{proof}

The following functional equation is the analogue of \cite[Lemma 2.3]{PDD} for practical numbers and of \cite[Lemma 5]{DN} for practical integer partitions.
Other analogues include polynomials over finite fields \cite[Lemma 5]{DPD} and permutations \cite[Lemma 11]{DPD}.

\begin{lemma}\label{lp1}
For $n\ge 0$,
$$B_n = \sum_{k=0}^n \binom{n}{k} P_k B_{n-k,k+1}.$$
\end{lemma}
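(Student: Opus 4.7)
The plan is to prove the identity via a bijection between set partitions of $[n] := \{1, 2, \ldots, n\}$ and triples $(S, \pi_1, \pi_2)$, where $S \subset [n]$ has some size $k \in \{0, \ldots, n\}$, $\pi_1$ is a practical set partition of $S$, and $\pi_2$ is a set partition of $[n] \setminus S$ all of whose block sizes exceed $k+1$. Since the triples are counted by $\sum_{k=0}^n \binom{n}{k} P_k B_{n-k, k+1}$ (choose $S$, then $\pi_1$, then $\pi_2$), such a bijection yields the identity.

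For the forward map, given a partition $\pi$ of $[n]$, list its blocks in non-decreasing order of size, $a_1 \le a_2 \le \cdots \le a_l$. By Lemma \ref{lp0}, the first $j$ of these blocks form a practical partition of their union if and only if $a_{j'} \le 1 + \sum_{j'' < j'} a_{j''}$ for every $j' \le j$. Let $i$ be the largest index in $\{0, 1, \ldots, l\}$ for which this condition holds (with $i = 0$ always permissible, corresponding to the empty practical partition of $S = \emptyset$), and set $k := a_1 + \cdots + a_i$. Take $S$ to be the union of the first $i$ blocks, $\pi_1$ the induced partition of $S$, and $\pi_2$ the partition of $[n] \setminus S$ formed by the remaining blocks. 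By the maximality of $i$, either $i = l$ (so $\pi_2$ is empty) or $a_{i+1} > 1 + k$, which combined with the sorting gives $a_j > k+1$ for every $j > i$, confirming that $\pi_2$ satisfies the required size constraint.

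For the inverse, send $(S, \pi_1, \pi_2)$ to $\pi_1 \cup \pi_2$, viewed as a partition of $[n]$. The key to showing that the two maps invert one another is the \emph{size gap}: blocks of $\pi_1$ have sizes at most $k$, blocks of $\pi_2$ have sizes at least $k+2$, and no block has size exactly $k+1$. Consequently, when we sort the blocks of $\pi_1 \cup \pi_2$, the $\pi_1$-blocks precede the $\pi_2$-blocks; the first $|\pi_1|$ of them form a practical partition by construction; and the first $\pi_2$-block (if any) has size $> 1 + k$, which blocks any further extension of the practical initial segment. Thus the greedy index $i$ recovered by the forward map equals $|\pi_1|$, and the original triple is reconstructed. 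Ties in block size cause no trouble, because the greedy condition depends only on the sizes, and equal-sized blocks either both lie in $\pi_1$ or both in $\pi_2$.

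The main obstacle will be verifying this uniqueness of the decomposition carefully: one must rule out any alternative split of $\pi$ into a practical part and a large-block part. This reduces to the structural observation that blocks of sizes in $[1, k]$ and in $[k+2, \infty)$ cannot be repartitioned into a different valid pair $(k', \pi_1', \pi_2')$, since a smaller $k'$ would leave some size-$\le k$ block stranded in $\pi_2'$ (violating its size lower bound $> k'+1$), while a larger $k'$ would require some size-$\ge k+2$ block to appear as the smallest new entry in a practical extension of $\pi_1$, contradicting the practical condition $a' \le 1 + k$.
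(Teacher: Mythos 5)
Your proof is correct and follows essentially the same decomposition as the paper: given a set partition, sort the blocks by size and extract the maximal prefix forming a practical partition; by maximality, the remaining blocks all have size greater than $k+1$. The paper phrases this as a direct counting argument, while you make the bijection and its inverse fully explicit, including the verification that the greedy index is recovered and that ties in block size cause no ambiguity — all of which is sound and fills in details the paper leaves implicit.
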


\begin{proof}
Given any partition of $n$ objects with block sizes $a_1\le a_2 \le \ldots \le a_l$, let $l_0$ be the largest index such that
\begin{equation*}
a_i \le 1+ \sum_{1\le j < i} a_j  \qquad (1\le i \le l_0),
\end{equation*}
and let $l_0=0$ if $a_1>1$. By Lemma \ref{lp0}, the blocks of sizes $a_1\le \ldots \le a_{l_0}$ form a practical set partition of 
a set with $k:= \sum_{1\le j \le l_0} a_j$ elements. 
Since $l_0$ was maximal, the remaining blocks have sizes $k+1<a_{l_0+1}\le \ldots \le a_l$ and 
form a $(k+1)$-rough set partition of a set with $n-k$ elements. The lemma now follows since $0\le k \le n$ and there are
$ \binom{n}{k}$ ways to choose the $k$ elements that belong to the practical set partition.
\end{proof}

\begin{lemma}\label{lp2}
For $n\ge 1$,
$$ I_n = B_{n,1} + \sum_{k=1}^{\lfloor \frac{n-2}{2}\rfloor}  \binom{n}{k} P_k B_{n-k,k+1}.
$$
\end{lemma}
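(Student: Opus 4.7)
The plan is to derive Lemma \ref{lp2} directly from Lemma \ref{lp1} by isolating the two boundary terms in the sum and discarding indices in the middle for which the factor $B_{n-k,k+1}$ necessarily vanishes.

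First I would start with the identity from Lemma \ref{lp1},
$$B_n = \sum_{k=0}^n \binom{n}{k} P_k B_{n-k,k+1},$$
and peel off the endpoints of the summation. The $k=0$ term contributes $\binom{n}{0}P_0 B_{n,1}=B_{n,1}$ (using $P_0=1$), and the $k=n$ term contributes $\binom{n}{n}P_n B_{0,n+1}=P_n$ (using the convention $B_{0,\mathcal{S}}=1$). Rearranging and recalling that $I_n = B_n - P_n$, this yields
$$I_n = B_{n,1} + \sum_{k=1}^{n-1} \binom{n}{k} P_k B_{n-k,k+1}.$$

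The remaining step, which is really the only substantive observation, is to show that the terms with $\lfloor (n-2)/2\rfloor < k \le n-1$ are all zero. For such $k$ we have $1\le n-k\le k+1$, so $B_{n-k,k+1}$ counts partitions of a nonempty set of $n-k$ elements into blocks whose sizes all exceed $k+1$; any such block would have size at least $k+2>n-k$, which is impossible. Hence $B_{n-k,k+1}=0$ for these $k$, and only the range $1\le k\le \lfloor (n-2)/2\rfloor$ survives, giving exactly the claimed identity.

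There is no real obstacle here; the whole argument is bookkeeping plus the elementary vanishing observation above. The only thing to get right is the boundary case $n-k=0$ (handled separately as the $k=n$ term, since the convention $B_{0,\mathcal{S}}=1$ would otherwise mistakenly contribute), and the fact that the threshold $\lfloor (n-2)/2\rfloor$ is precisely the largest integer $k$ with $n-k\ge k+2$.
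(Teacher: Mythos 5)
Your proof is correct and follows essentially the same route as the paper: isolate the $k=0$ and $k=n$ terms in Lemma \ref{lp1}, note $P_0=1$ and $B_{0,n+1}=1$, and observe that $B_{n-k,k+1}=0$ whenever $0<n-k<k+2$, which kills all terms with $\lfloor(n-2)/2\rfloor<k<n$. Nothing further is needed.
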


\begin{proof}
In Lemma \ref{lp1}, the term corresponding to $k=0$ is $B_{n,1}$, since $P_0=1$. The term corresponding to $k=n$ is $P_n$, since 
$B_{0,n+1}=1$. If $(n-2)/2<k<n$, then $0<n-k < k+2$, so $B_{n-k,k+1}=0$. The result now follows since $I_n=B_n-P_n$.
\end{proof}

For the remainder of this section, we write
$$ \beta_m(z) = \sum_{j=0}^m \frac{z^j}{j!}.$$

\begin{lemma}\label{lp3}
For $n\ge 1$, $m\ge 0$, we have
$$ B_{n,m} \le  \frac{n! \exp\bigl(e^r-\beta_m(r)\bigr)}{ r^n } $$
\end{lemma}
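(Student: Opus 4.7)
The plan is to use the standard upper bound for coefficients of power series with nonnegative coefficients. By Lemma \ref{lem2} applied to $\mathcal{S}=\{1,2,\ldots,m\}$, the exponential generating function is
$$ G_\mathcal{S}(z) = \exp\bigl(e^z - 1 - \alpha(z)\bigr), $$
and for this $\mathcal{S}$ we have $\alpha(z) = \sum_{k=1}^m z^k/k! = \beta_m(z) - 1$. Hence
$$ G_\mathcal{S}(z) = \exp\bigl(e^z - \beta_m(z)\bigr). $$

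The coefficients $B_{n,m}/n!$ of $G_\mathcal{S}(z)$ are all nonnegative, since they count set partitions. Therefore, for any real $r > 0$ that lies inside the radius of convergence (which here is $+\infty$, so any $r>0$ works), every single term in the power series is dominated by the sum:
$$ \frac{B_{n,m}}{n!}\, r^n \le \sum_{k\ge 0} \frac{B_{k,m}}{k!}\, r^k = G_\mathcal{S}(r) = \exp\bigl(e^r - \beta_m(r)\bigr). $$
Rearranging gives $B_{n,m} \le n!\exp(e^r-\beta_m(r))/r^n$, which is the claim once $r$ is specialized to the saddle point $re^r=n$.

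In short, the proof is a two-line application of nonnegativity; no actual saddle point analysis is needed for the upper bound, and the particular choice of $r$ satisfying $re^r=n$ only serves to produce a bound that is sharp enough for the intended applications in the proof of Theorem \ref{thm2}. There is no real obstacle; the only thing worth flagging is the bookkeeping identity $\alpha(z)+1=\beta_m(z)$, which converts the $-1-\alpha(z)$ appearing in Lemma \ref{lem2} into the cleaner $-\beta_m(z)$ in the statement.
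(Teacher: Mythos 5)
Your proof is correct. The bookkeeping identity $\beta_m(z)=1+\alpha(z)$ for $\mathcal{S}=\{1,\dots,m\}$ is right, the nonnegativity argument is sound, and the conclusion matches the lemma once $r$ is taken to be the saddle point $re^r=n$.

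The paper proves this lemma by a slightly different route: it starts from the Cauchy integral representation \eqref{Cauchy}, bounds the modulus of the integrand on the circle $|z|=r$ using $|\exp(w)|=\exp(\re w)$ together with $\re(z^k)\le r^k$, and then multiplies by the length of the contour. That argument and yours produce the identical inequality, and in fact they are two faces of the same elementary bound for power series with nonnegative Taylor coefficients: the contour estimate, once the integrand is majorized on the circle, collapses to precisely the one-term-versus-whole-sum inequality you invoke. Your version has the small advantage of dispensing with the complex contour altogether, making it clear that no analytic-continuation or saddle-point machinery is being used at this stage; the paper's version keeps the presentation uniform with the Cauchy-integral framework already set up for Theorem \ref{thm1}. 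Either way, the estimate is tight enough for the purposes of Theorem \ref{thm2}, and, as you observe, the specific choice $re^r=n$ is what makes it so.
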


\begin{proof}
With $\mathcal{S}=\{1,2,3,\ldots, m\}$, the integrand in equation \eqref{Cauchy} satisfies
$$ \left| \frac{\exp\left(e^z-\beta_m(z)\right) }{z^{n+1}} \right|=
 \frac{\exp\left(\re \sum_{k=m+1}^\infty \frac{z^k}{k!}\right) }{|z|^{n+1}} 
 \le  \frac{\exp\left(\sum_{k=m+1}^\infty \frac{r^k}{k!}\right) }{r^{n+1}} ,
$$
and therefore
$$  \frac{B_{n,m}}{n!} \le \frac{1}{2\pi }\, 2\pi r \, \frac{\exp\left(\sum_{k=m+1}^\infty \frac{r^k}{k!}\right) }{r^{n+1}} 
= \frac{ \exp\bigl(e^r-\beta_m(r)\bigr)}{ r^n }.
$$
\end{proof}

\begin{proof}[Proof of Theorem \ref{thm2}]
We write $b_{n,k}=B_{n,k}/n!$, $p_n=P_n/n!$ and $i_n=I_n/n!$. Lemma \ref{lp2} says that 
$$ 0 \le  i_n - b_{n,1} = \sum_{k=1}^{N}  p_k b_{n-k,k+1} \le  \sum_{k=1}^{N}  b_k b_{n-k,k+1} ,
$$
where $N=\lfloor \frac{n-2}{2}\rfloor$. To establish Theorem \ref{thm2}, 
we need to show that the last sum satisfies
\begin{equation}\label{tbp}
 \sum_{k=1}^{N}  b_k b_{n-k,k+1} \ll b_{n,1} \exp(-(\log n)^2/3). 
\end{equation}
We write
$$  \sum_{k=1}^{N}  b_k b_{n-k,k+1} =   \sum_{1\le k \le L}+  \sum_{L<k\le M}+  \sum_{M<k\le N}=S_1+S_2+S_3,$$
say, where 
$ L= (\log n)^{3/2}$ and $M=\lfloor \frac{n-4}{3}\rfloor$. 
Let
$$ g(n) =n \log(r(n))-\frac{n}{r(n)}\sim n\log \log n,$$
by \eqref{Wasymp}. 
It is easy to verify that
\begin{equation}\label{gprime}
g'(n)=\log r(n) \sim \log \log n.
\end{equation}
Theorem \ref{thm1} shows that
\begin{equation*}
  b_{n,1} =\exp(e^r-n\log r +O(r)) =\exp(-g(n) +O(\log n)),
\end{equation*}
since $e^r=n/r$. Similarly, Lemma \ref{lp3} shows that, for $n\ge 1$ and $m\ge 0$,
$$ b_{n,m} \le \exp(-g(n) -\beta_m(r(n))).$$
It follows that
\begin{equation}\label{lbb}
 \log( b_k b_{n-k,k+1}) \le - g(k) - g(n-k) - \beta_{k+1}(r(n-k)). 
\end{equation}
For $S_1$, we have $1\le k\le L=(\log n)^{3/2}$, so \eqref{lbb} and \eqref{gprime} yield
\begin{equation*}
\begin{split}
  \log( b_k b_{n-k,k+1}) &\le 0 -g(n-L) -\beta_2(r(n/2))\\
  & \le - g(n) +O(L \log\log n) - (\log n)^2/(2+\varepsilon) \\
  & \le  - g(n)  - (\log n)^2/(2+2\varepsilon),
\end{split}
\end{equation*}
for any $\varepsilon >0$ and $n\ge n_0(\varepsilon).$
Hence
\begin{equation}\label{S1B}
\begin{split}
S_1 & \le L \exp(- g(n)  - (\log n)^2/(2+2\varepsilon)) \\
& \le b_{n,1} \exp(- (\log n)^2/(2+3\varepsilon)).
\end{split}
\end{equation}

When $L<k\le N$, we have
\begin{equation*}
\begin{split}
  0\le e^{r(n-k)}-\beta_{k+1}(r(n-k)) &= \sum_{j=k+2}^\infty \frac{(r(n-k))^j}{j!} 
 \le  \sum_{j=k+2}^\infty \frac{k^j}{j!}\left(\frac{r(n)}{k}\right)^j \\
&\le \left(\frac{r(n)}{L}\right)^{k+2} e^k = o (1), 
\end{split}
\end{equation*}
as $n\to \infty$. Since $ e^{r(n-k)}=(n-k)/r(n-k)$, \eqref{lbb} yields
\begin{equation*}
\begin{split}
  \log( b_k b_{n-k,k+1}) &\le -g(k) - g(n-k) -\frac{n-k}{r(n-k)} +o(1) \\
  & = -f(n,k)+o(1),
\end{split}
\end{equation*}
say. We claim that $f(n,k)$ is decreasing (and hence $-f(n,k)$ is increasing) in $k$, for $1\le k \le n/2$.
Indeed, since $r(n)$ is increasing, \eqref{gprime} shows that $g(k)+g(n-k)$ is decreasing in $k$, for $1\le k \le n/2$.
Moreover, $ e^{r(n-k)}$ is clearly decreasing in $k$. 
Thus,
\begin{equation}\label{S2B}
\begin{split}
S_2 & \le M \exp(-f(n,M) +o(1)) \\
& \le b_{n,1} \exp(g(n) - f(n,n/3) + O(\log n)) \\
& \le  b_{n,1} \exp(-0.03 n/ \log n),
\end{split}
\end{equation}
for $n$ sufficiently large. 
The last inequality, whose derivation is not difficult but somewhat tedious, follows from \eqref{Wasymp}.

Finally, when $M<k\le N$, we have $n-k\ge k+2 > (n-k)/2$. Thus, $B_{n-k,k+1}=1$ and $b_{n-k,k+1}=1/(n-k)!$.
Stirling's approximation, in the form $\log(n!)=n(\log n -1) +O(\log n)$, yields 
\begin{equation*}
\begin{split}
  \log( b_k b_{n-k,k+1}) &\le - g(k) - \log( (n-k)!)\\
  & \le 0-(n-k)(\log(n-k) -1) +O(\log n) \\
  & \le - (n/2)(\log(n/2)-1)+O(\log n).
\end{split}
\end{equation*}
Hence
\begin{equation}\label{S3B}
\begin{split}
S_3 & \le N \exp( - (n/2)(\log(n/2)-1)+O(\log n)) \\
& \le b_{n,1} \exp(g(n)   - (n/2)(\log(n/2)-1) + O(\log n)) \\
& \le  b_{n,1} \exp\left(- \frac{n \log n}{3} \right),
\end{split}
\end{equation}
for $n$ sufficiently large. 

The estimates \eqref{S1B}, \eqref{S2B} and \eqref{S3B} show that 
\eqref{tbp} holds, which completes the proof of Theorem \ref{thm2}.
\end{proof}

\end{document}